\newcommand{\Htwo}{{\mathcal{H}_{2}}}
\title{Recycling BiCGSTAB with an Application to
Parametric Model Order Reduction
\thanks{This material is based upon work supported by the National Science Foundation (USA) under Grant Number DMS-1025327 and Grant Number DMS-1217156. Also supported by Council of Scientific and Industrial Research (India) Grant Number 25/(0220)/13/EMR-II.}
}
\author{Kapil Ahuja\thanks{Discipline of Computer Science
and Engineering, Indian Institute of Technology Indore, Indore, India
(\email{kahuja@iiti.ac.in}).} \and
Peter Benner\thanks{Max Planck Institute for Dynamics of
Complex Technical Systems, Magdeburg, Germany
(benner@mpi-magdeburg.mpg.de).} \and
Eric de Sturler\thanks{Department of Mathematics, Virginia Tech, Blacksburg, USA
(\email{sturler@vt.edu}).} \and
Lihong Feng\thanks{Max Planck Institute for Dynamics of
Complex Technical Systems, Magdeburg, Germany
(feng@mpi-magdeburg.mpg.de).}}
\begin{document}
\maketitle
\slugger{mms}{xxxx}{xx}{x}{x--x}

\begin{abstract}
Krylov subspace recycling is a process for accelerating the convergence of sequences of linear systems. Based on this technique, the recycling BiCG algorithm has been developed recently. Here, we now generalize and extend this recycling theory to BiCGSTAB. Recycling BiCG focuses on efficiently solving sequences of dual linear systems, while the focus here is on efficiently solving sequences of single linear systems (assuming non-symmetric matrices for both recycling BiCG and recycling BiCGSTAB).

As compared with other methods for solving sequences of single linear systems with non-symmetric matrices (e.g., recycling variants of GMRES), BiCG based recycling algorithms, like recycling B\-i\-C\-G\-S\-T\-A\-B, have the advantage that they involve a short-term recurrence, and hence, do not suffer from storage issues and are also cheaper with respect to the orthogonalizations.

We modify the BiCGSTAB algorithm to use a recycle space, which is built from left and right approximate invariant subspaces. Using our algorithm for a parametric model order reduction example gives good results. We show about 40\% savings in the number of matrix-vector products and about 35\% savings in runtime.
\end{abstract}

\begin{keywords}
Krylov subspace recycling, deflation, BiCGSTAB, model reduction, rational Krylov.
\end{keywords}

\begin{AMS}
65F10, 65N22, 93A15, 93C05.
\end{AMS}

\pagestyle{myheadings}
\thispagestyle{plain}
\markboth{AHUJA, BENNER, DE STURLER, FENG}{RECYCLING BICGSTAB}

\section{Introduction}
We focus on efficiently solving sequences of linear systems of the following type:
\begin{equation}\label{eq:initial3}
    {A}^{(\iota)}\ x^{(\iota, \kappa)} = {b}^{(\iota, \kappa)},
\end{equation}
where ${A}^{(\iota)} \in \mathbb{R}^{n \times n}$ varies with $\iota$; ${b}^{(\iota, \kappa)} \in \mathbb{R}^n$ varies with both $\iota$ and $\kappa$; the matrices ${A}^{(\iota)}$ are large, sparse, and non-symmetric; and the change from one system to the next is small.

Krylov subspace methods are usually used for solving such large and sparse linear systems. For linear systems with non-symmetric matrices, GMRES~\cite{saad86gmres} is one of the first choices, but it is generally not optimal with respect to the runtime. BiCGSTAB~\cite{vorst92bicgstab}
is competitive with GMRES, and in many cases performs better than GMRES in time. Also, it does not suffer from storage issues, which is a problem in GMRES.

Krylov subspace recycling is a technique for efficient solution of sequences of linear systems. Here, while solving one system in the sequence, approximate invariant subspaces of the matrix are selected and used to accelerate the convergence of the next system in the sequence. Since the matrices in the sequence do not change much, this provides substantial reduction in both the number of matrix-vector products and time. See~\cite{parks06gcrodr} and~\cite{ahuja12rbicg} for more about Krylov subspace recycling.

Here, we have a sequence of linear systems with non-symmetric matrices, and hence, GCRO-DR~\cite{parks06gcrodr} and GCROT~\cite{parks06gcrodr} algorithms, which are recycling variants of GMRES, are more suited.
However, since there is no optimal method in time for solving linear systems with non-symmetric matrices, there is no optimal method in time for solving sequences of such linear systems. Like GMRES, its recycling variants may also suffer from storage issues. Hence, we develop a recycling variant of BiCGSTAB based on the work in~\cite{ahuja11phd, ahuja09ms}.

We explore the usage of recycling B\-i\-C\-G\-S\-T\-A\-B for parametric model order reduction (PMOR)~\cite{baur2011PMOR,feng:proceedings} that requires solution of systems of the form (\ref{eq:initial3}). We show about 40\% reduction in the number of matrix-vector products when using recycling as compared with not using recycling in B\-i\-C\-G\-S\-T\-A\-B. In terms of time, this translates to about 35\% savings in runtime.

In related work in this area (specific to BiCGSTAB),~\cite{gutnket2014bicg} discusses a variant of recycling BiCGSTAB  (using the terminology of deflation and augmentation instead of Krylov subspace recycling). There are three main differences between that approach and ours. First, we use a different inner product in our derivation as compared with~\cite{gutnket2014bicg}. Second, the focus in~\cite{gutnket2014bicg} is on only using a recycle space, while here we discuss both using and generating a recycle space. Third, we also give numerical experiments demonstrating the usefulness of our approach, while~\cite{gutnket2014bicg} discusses only a theoretical framework. Also,~\cite{abdel2007lefteig, morgan2010nlandr, abdel2014rhs} focus on variants of deflated BiCGSTAB for multiple right hand sides and do not discuss changing matrices. 

Since we propose recycling BiCGSTAB as an alternative for GMRES-based recycling solvers, we also compare with GCRO-DR. In the context of PMOR, simplified versions of GCRO-DR have also been proposed~\cite{feng:proceedings, feng2012Mems}. We show that our recycling BiCGSTAB is 10\% more efficient in time than GCRO-DR for our test problem.

%
%


To simplify notation, we drop the superscripts $\iota$ and $\kappa$ in (\ref{eq:initial3}).
Throughout the paper, $||\cdot||$ refers to the two-norm, $(\cdot, \cdot)$ refers to the standard inner product, $*$ indicates the conjugate transpose operation, $\bar{\cdot}$ indicates complex conjugation, and 
$\underline{\cdot}$ is used to signify a rectangular matrix. 

The rest of the paper is divided into six more sections. The bi-Lanczos algorithm~\cite{lanczos1952base} and recycling BiCG~\cite{ahuja12rbicg} form the basis of our recycling BiCGSTAB. Hence, we revisit these in Sections \ref{sec:bilanczos} and \ref{sec:rbicg}, respectively. In Section \ref{sec:rbicg}, we also give a new result related to recycling BiCG. Next, we derive recycling BiCGSTAB in Section \ref{sec:rbicgstab}. In Section \ref{sec:analysis}, we analyze the subspaces that can be used in recycling BiCGSTAB. Finally, we discuss the application of recycling BiCGSTAB to PMOR in Section \ref{sec:appl}, and give concluding remarks in Section \ref{sec:conclusion}.

\section{The Bi-Lanczos Algorithm}\label{sec:bilanczos}
Consider a primary system $Ax = b$, with $x_0$ the initial guess and $r_0 = b - A x_0$ the residual. Also, consider an auxiliary dual system $A^*\tilde{x} = \tilde{b}$, with $\tilde{b}$ a random vector, $\tilde{x}_0$ the initial guess, and $\tilde{r}_0 = \tilde{b} - A^* \tilde{x}_0$ the residual. This dual system is termed auxiliary because for this work we are not interested in its solution (although the system is real). The bi-Lanczos algorithm remains the same even when the dual system is of interest.


Let the columns of $V_{i} = [v_{1}\ v_{2}\ \ldots\ v_i]$ define the basis of the primary system Krylov space $\mathcal{K}^{i}(A,r_{0}) \equiv span\{r_{0},\ Ar_{0},\ A^2 r_{0},\ \cdots,\ A^{i-1} r_{0}\}$. Also, let the columns of ${\tilde V}_{i} = [{\tilde v}_{1}\ {\tilde v}_{2}\ \ldots\ \tilde{v}_i]$ define the basis of the dual system Krylov space $\mathcal{\tilde{K}}^{i}(A^*,\tilde{r}_{0}) \equiv span\{\tilde{r}_{0},\ A^*\tilde{r}_{0},\ A^{2*} \tilde{r}_{0},\ \cdots,\ A^{(i-1)*} \tilde{r}_{0}\}$.

The bi-Lanczos algorithm computes the columns of $V_i$ and $\tilde{V}_i$ such that, in exact arithmetic, $V_{i} \perp_{b} \tilde{V}_{i}$, where $\perp_{b}$ is referred to as bi-orthogonality; this implies that $\tilde{V}_i^*V_i$ is a diagonal matrix. The columns of $V_i$ and $\tilde{V}_i$   are called Lanczos vectors. There is a degree of freedom in choosing the scaling of the Lanczos vectors~\cite{greenbaum1997book, gutnket1997survey, saad03book}. Using the scaling
\begin{equation}\label{scalingfactor}
    ||v_{i}|| = 1, \quad \quad (v_i, \tilde{v}_i) = 1,
\end{equation}
we initialize the Lanczos vectors as follows:
\begin{align*}
    \begin{array}
        [l]{ll}
        v_1 = \frac{r_0}{|| r_0 ||}, & \tilde{v}_{1} = \frac{\tilde{r}_{0}}{(v_{1}, \tilde{r}_{0})}.
    \end{array}
\end{align*}
The ($i+1$)-at Lanczos vectors are given by
\begin{align}\label{eq:biLanczosCompute}
    \begin{array}
        [l]{lll}
        \gamma v_{i+1} & = & A v_i - V_i \tau \perp \tilde{V}_i, \\
        \tilde{\gamma}\tilde{v}_{i+1} & = & A^* \tilde{v}_i - \tilde{V_i}\tilde{\tau}  \perp {V}_i,
    \end{array}
\end{align}
where $\gamma$, $\tilde{\gamma}$ and $\tau$, $\tilde{\tau}$ are are determined by the biorthogonality condition (\ref{eq:biLanczosCompute}) and the normalization condition (\ref{scalingfactor}). The computation of the ($i+1$)-st Lanczos vectors requires only the $i$-th and the ($i-1$)-st Lanczos vectors (see~\cite{saad03book}). These $3$-term recurrences are called the bi-Lanczos relations, and are defined as follows:
\begin{align}\label{eq:biLanczos}
    \begin{array}
        [l]{lllll}
            AV_{i} & = & V_{i+1}\underline{T}_{i} & =  & V_{i}{T}_{i} + t_{i+1,i}v_{i+1}e_i^T, \\
            A^*\tilde{V}_{i} & = & \tilde{V}_{i+1}\tilde{\underline{T}}_{i} & = & \tilde{V}_{i}{\tilde{T}}_{i} + \tilde{t}_{i+1,i}\tilde{v}_{i+1}e_i^T, \\
    \end{array}
\end{align}
where $T_i$, $\tilde{T}_i$ are $i \times i$ tridiagonal matrices, $t_{i+1,i}$ is the last element of the last row of $\underline{T}_i \in \mathbb{C}^{(i+1) \times i}$, and $\tilde{t}_{i+1,i}$ is the last element of the last row of $\tilde{\underline{T}}_i \in \mathbb{C}^{(i+1) \times i}$.

The bi-Lanczos algorithm breaks down when at any step $i$, $\tilde{v}_i^*v_i$ = 0. There exist so-called look-ahead strategies~\cite{freund:lookahead, gutnket1997survey}, that take multiple Lanczos vectors together in-succession and make them block bi-orthogonal, to avoid this breakdown. 


\section{Recycling BiCG Revisited}\label{sec:rbicg}
We first introduce a generalization of the bi-Lanczos algorithm~\cite{ahuja11phd}. We show that even for a pair of matrices that are not conjugate transposes of each other,
one can build bi-orthogonal bases (for the associated two Krylov subspaces) using a short-term recurrence.

Expanding the search space to include a recycle space leads to an augmented bi-orthogonality condition. The augmented bi-Lanczos algorithm, as derived for recycling BiCG~\cite{ahuja12rbicg}, computes bi-orthogonal bases for the two Krylov subspaces such that this augmented bi-orthogonality condition is satisfied. Next, we revisit augmented bi-Lanczos~\cite{ahuja09ms} and show that it is a special case of generalized bi-Lanczos.  Finally, we list the recycling BiCG algorithm from~\cite{ahuja12rbicg}.

There are numerous ways of computing good bases for Krylov subspaces $\mathcal{K}^{m}(B,v_{1})$ and $\mathcal{K}^{m}(\tilde{B},\tilde{v}_{1})$, where $B$ and $\tilde{B}$ are $n \times n$ general matrices, and $v_1$ and $\tilde{v}_1$ are any two $n$ dimensional vectors. Let the columns of $V_{m} = [v_{1}\ v_{2}\ \ldots\ v_m]$ and $\tilde{V}_{m} = [\tilde{v}_{1}\ \tilde{v}_{2}\ \ldots\ \tilde{v}_m]$ define one such pair of good bases for $\mathcal{K}^{m}(B,v_{1})$ and $\mathcal{K}^{m}(\tilde{B},\tilde{v}_{1})$, respectively. We compute these bases using the following, in principle, full recurrences:
\begin{align}\label{eq:basis1}
    \begin{array}
        [l]{lll}
	\beta_{i+1,i}{v}_{i+1} & = & {B}{v}_i - {\beta}_{ii}{v}_i -  {\beta}_{i-1,i}{v}_{i-1} - \ldots - {\beta}_{1i}{v}_1,
	\end{array}
\end{align}
\begin{align}\label{eq:basis2}
    \begin{array}
        [l]{lll}
	\tilde{\beta}_{i+1,i}\tilde{v}_{i+1} & = & \tilde{B}\tilde{v}_i - \tilde{\beta}_{ii}\tilde{v}_i -  \tilde{\beta}_{i-1,i}\tilde{v}_{i-1} - \ldots - \tilde{\beta}_{1i}\tilde{v}_1,
	\end{array}
\end{align}
where $i \in \{1, 2, 3, \ldots,m-1\}$ and $\{\beta_{ij}\}$, $\{\tilde{\beta}_{ij}\}$ are scalars to be determined. We assume that for $i < m$, $\mathcal{K}^{i}(B,v_{1})$ is not an invariant subspace of $B$ (similarly, $\mathcal{K}^{i}(\tilde{B},\tilde{v}_{1})$ is not an invariant subspace of $\tilde{B}$ for $i < m$). We can rewrite (\ref{eq:basis1}) as follows:
\begin{equation*}
	{B}{v}_i  = {\beta}_{1i}{v}_1 + {\beta}_{2i}{v}_2 + \ldots + {\beta}_{i-1,i}{v}_{i-1} + {\beta}_{ii}{v}_i  +  {\beta}_{i+1,i}{v}_{i+1}.
\end{equation*}
Combining these equations, for $i \in \{1, 2, 3, \ldots, m-1\}$, into matrix form we get
\begin{align*}
   \begin{array}
	[l]{l}
	B[v_1 \ v_{2} \ldots v_{m-1}] =
   	\left[v_1 \ v_{2} \ldots v_{m-2} \ v_{m-1} \ v_{m}\right]
	\left[
       \begin{array}
        	[c]{ccccc}%
        	\beta_{11} & \beta_{12}  & \ldots      & \beta_{1,m-1} \\
        	\beta_{21} & \beta_{22}  & \ldots      & \beta_{2,m-1} \\
        	0         	   & \beta_{32}   & \ldots     & \beta_{3,m-1} \\
        	0         	   & 0                 & \ldots     & \beta_{4,m-1} \\
        	\vdots       & \ddots          & \ddots    & \vdots \\
        	0               &  0                &  \ddots 	 & \beta_{m-1,m-1} \\
        	0               &  0                &  \ldots     & \beta_{m,m-1} \\
      \end{array}
      \right],
   \end{array}
\end{align*}
or
$$BV_{m-1} = V_{m}\underline{H}_{m-1},$$
where $\underline{H}_{m-1}$ is an $m \times (m-1)$ upper Hessenberg matrix. This result also holds for each $i \in \{1, 2, 3, \ldots, m-1\}$, i.e.,
\begin{align}\label{eq:generalRelations1}
    \begin{array}
       [l]{lll}
	BV_i & = & V_{i+1}\underline{H}_i.
    \end{array}
\end{align}
Similarly, using (\ref{eq:basis2}) and following the steps above, we get the following relation for the dual system:
\begin{align}\label{eq:generalRelations2}
    \begin{array}
       [l]{lll}
	\tilde{B}\tilde{V}_i & = & \tilde{V}_{i+1}\tilde{\underline{H}}_i.
    \end{array}
\end{align}
The scalars $\{\beta_{ij}\}$ and $\{\tilde{\beta}_{ij}\}$ are determined by a choice of constraints. One option is to enforce that the columns of $V_i$ (and $\tilde{V}_i$) are orthonormal vectors (as in the Arnoldi algorithm). Another option, as in the bi-Lanczos algorithm, is to enforce\footnote{In this paper, for ease of exposition, we assume breakdowns do not happen. Hence, $(\tilde{v}_i, v_i) \neq  0$.}
\begin{equation*}
	V_i \perp_b \tilde{V}_i, \quad ||v_i|| = 1, \quad \text{and} \quad  (v_i, \tilde{v}_i) = 1,
\end{equation*}
or
\begin{equation}\label{eq:findScalars}
	\tilde{V}_{i}^*V_{i} = I \quad \text{and} \quad  ||v_i|| = 1.
\end{equation}
If $\tilde{B} = B^*$, then (\ref{eq:generalRelations1}), (\ref{eq:generalRelations2}), and (\ref{eq:findScalars}) lead to the bi-Lanczos relations (\ref{eq:biLanczos}), which consist of three-term recurrences. Our goal here is to relax the condition $\tilde{B} = B^*$ and still obtain short-term recurrences.

\begin{theorem}\label{thm:genBi}
	Let $B, \tilde{B} \in \mathbb{C}^{n \times n}$, and let the following conditions
	hold:
	\begin{enumerate}[(a)]
		\item $B  - \tilde{B}^* = \tilde{F}_k \tilde{C}_k^* - C_k F_k^*$, where $C_k, \tilde{C}_k, F_k, \tilde{F}_k  \in \mathbb{C}^{n \times k}$,
		\item $\forall{x}: B x \perp \tilde{C}_k$, $\forall\tilde{x}: \tilde{B} \tilde{x} \perp C_k$,
	 	\item $v_1 \perp \tilde{C}_k$, and $\tilde{v}_1 \perp C_k$.
	\end{enumerate}
	Also, let (\ref{eq:findScalars}) be used as the set of constraints for (\ref{eq:generalRelations1}) and (\ref{eq:generalRelations2}). Then, $\beta_{ij} = 0$ and $\tilde{\beta}_{ij} = 0$  for $j > {i+1}$, which leads to the following three-term recurrences:
	\begin{align*}
	    \begin{array}
	    [l]{lll}
	    \beta_{i+1,i}{v}_{i+1} & = & {B}{v}_i - {\beta}_{ii}{v}_i -  {\beta}_{i-1,i}{v}_{i-1}, \\
	    \tilde{\beta}_{i+1,i}\tilde{v}_{i+1} & = & \tilde{B}\tilde{v}_i - \tilde{\beta}_{ii}\tilde{v}_i -  \tilde{\beta}_{i-1,i}\tilde{v}_{i-1},
	    \end{array}
	\end{align*}
	for $i \in \{1, 2, 3, \ldots, m-1\}$.
\end{theorem}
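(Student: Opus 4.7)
The plan is to mimic the classical proof that bi-Lanczos produces a tridiagonal matrix, by evaluating $\tilde v_i^* B v_j$ in two different ways and matching. The nontrivial point is that the identity $\tilde v_i^* B v_j = (\tilde B \tilde v_i)^* v_j$, which is immediate when $\tilde B = B^*$, must now be recovered from hypothesis (a) together with (b) and (c). Once this identity is in hand, the entries of $\underline{H}_{m-1}$ above the first super-diagonal are forced to vanish by exactly the same counting argument as in the standard bi-Lanczos proof.

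First I would prove, by induction on $j$, the auxiliary claim that
\[
    \tilde C_k^* v_j = 0 \quad\text{and}\quad C_k^* \tilde v_j = 0 \quad\text{for all } j\ge 1.
\]
The base case $j=1$ is exactly condition (c). For the induction step, apply $\tilde C_k^*$ to the recurrence $\beta_{j+1,j} v_{j+1} = B v_j - \sum_{l\le j} \beta_{lj} v_l$: the term $\tilde C_k^* B v_j$ vanishes by (b), and $\tilde C_k^* v_l$ for $l\le j$ by the inductive hypothesis, so $\tilde C_k^* v_{j+1}=0$. The dual statement follows by swapping roles.

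The core computation then proceeds as follows. The recurrence (\ref{eq:basis1}) combined with the biorthogonality (\ref{eq:findScalars}) gives $\tilde v_i^* B v_j = \beta_{ij}$ for $i \le j+1$. Substituting $B = \tilde B^* + \tilde F_k \tilde C_k^* - C_k F_k^*$ from (a) produces
\[
    \tilde v_i^* B v_j = \tilde v_i^* \tilde B^* v_j + (\tilde v_i^* \tilde F_k)(\tilde C_k^* v_j) - (\tilde v_i^* C_k)(F_k^* v_j),
\]
and both correction terms drop out by the auxiliary claim. Hence $\tilde v_i^* B v_j = (\tilde B \tilde v_i)^* v_j$, and expanding $\tilde B \tilde v_i$ via its own recurrence (\ref{eq:basis2}) and applying biorthogonality once more yields $(\tilde B \tilde v_i)^* v_j = \overline{\tilde\beta}_{ji}$ when $j\le i+1$ and $0$ when $j>i+1$. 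Matching the two expressions forces $\beta_{ij}=0$ whenever $j > i+1$, which is exactly the tridiagonal structure claimed. The analogous computation, starting from $v_i^* \tilde B \tilde v_j$ and using the adjoint form $\tilde B = B^* - \tilde C_k \tilde F_k^* + F_k C_k^*$ of (a), yields $\tilde\beta_{ij}=0$ for $j>i+1$.

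The main obstacle, and the step in which all three hypotheses are used simultaneously, is establishing the auxiliary claim: without it, the rank-$k$ perturbation in (a) would prevent the collapse of $\tilde v_i^* B v_j$ to $(\tilde B \tilde v_i)^* v_j$, and $\underline{H}_{m-1}$ would stay fully upper Hessenberg rather than tridiagonal. Everything else in the argument is then a direct transcription of the standard short-recurrence derivation.
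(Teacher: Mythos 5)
Your proposal is correct and follows essentially the same route as the paper: the same induction (using (b) and (c)) to show $\tilde C_k^* v_j = 0$ and $C_k^* \tilde v_j = 0$, then substitution of (a) so that the rank-$k$ correction cancels and $\tilde v_i^* B v_j$ collapses to $(\tilde B \tilde v_i)^* v_j$. The only cosmetic difference is that you carry out the final step entry-by-entry, whereas the paper states it in matrix form as $H_i = \tilde H_i^*$ (an upper Hessenberg matrix equal to a lower Hessenberg one, hence tridiagonal), which also makes your separate dual computation unnecessary.
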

\begin{proof}
	Using (b) and (c) we can show that 
	\begin{equation}\label{eq:CVBiOrth}
		C_k^* \tilde{V}_i = 0 \quad \text{and} \quad \tilde{C}_k^* V_i = 0.
	\end{equation}
	We show $C_k^* \tilde{V}_i =0$ by induction. One can similarly show that $\tilde{C}_k^* V_i = 0$. $C_k^* \tilde{v}_1 = 0$ by (c). Let $C_k^*\tilde{v}_l = 0$ for $l = \{1, 2,\ldots,i\}$, and consider the case $l = i+1$. From (\ref{eq:basis2}) we know that
	\begin{equation*}
		\tilde{\beta}_{i+1,i}\tilde{v}_{i+1} = \tilde{B}\tilde{v}_i - \tilde{\beta}_{ii}\tilde{v}_i -  \tilde{\beta}_{i-1,i}\tilde{v}_{i-1} - \ldots - \tilde{\beta}_{1i}\tilde{v}_1.
	\end{equation*}
	Then, $C_k^* \tilde{v}_{i+1}  = 0$ since $C_k^* \tilde{B}\tilde{v}_i  = 0$ using (b) and $\tilde{\beta}_{li}C_k^*\tilde{v}_l = 0$ for $l \in \{1, 2,\ldots,i\}$ by the induction hypothesis\footnote{Note that our earlier assumption, $\mathcal{K}^{i}(\tilde{B},\tilde{v}_{1})$ is not an invariant subspace of $\tilde{B}$ for $i < m$, shows that $\tilde{\beta}_{i+1,i} \ne 0$.}. This proves (\ref{eq:CVBiOrth}). Multiplying both sides in (\ref{eq:generalRelations1}) by $\tilde{V}_i^*$ and using (\ref{eq:findScalars}) we get
	\begin{equation*}
		\tilde{V}_i^*BV_i = H_i.
	\end{equation*}
	Substituting (a) in the above equation leads to
	\begin{align*}
		\begin{array}
		[l]{llll}
		\tilde{V}_i^*\left(\tilde{B}^* + \tilde{F}_k \tilde{C}_k^* - C_k F_k^*\right)V_i & = & H_i & \Longleftrightarrow \\
		\tilde{V}_i^*\tilde{B}^*V_i + \tilde{V}_i^*\tilde{F}_k \tilde{C}_k^*V_i - \tilde{V}_i^*C_k F_k^*V_i & = & H_i. &
		\end{array}
	\end{align*}
	Using (\ref{eq:CVBiOrth}) we get
	\begin{align*}
		\begin{array}
		[l]{llll}
		\tilde{V}_i^*\tilde{B}^*V_i & = & H_i & \Longleftrightarrow \\
		(\tilde{B}\tilde{V}_i)^*V_i & = & H_i. &
		\end{array}
	\end{align*}
	Finally, using (\ref{eq:generalRelations2}) and (\ref{eq:findScalars}) in the above equation gives
	\begin{equation*}
		\tilde{H}_i^* = H_i.
	\end{equation*}
	This implies both $H_i$ and $\tilde{H}_i$ are tridiagonal matrices, and hence $\beta_{ij} = 0$ and $\tilde{\beta}_{ij} = 0$  for $j > {i+1}$.
\end{proof}

We now revisit augmented bi-Lanczos~\cite{ahuja09ms} and show that it is a special case of generalized bi-Lanczos. The BiCG algorithm is primarily used where the dual system is not auxiliary. That is, one needs to solve both a primary system and a dual system. The recycling BiCG algorithm (also termed RBiCG) was developed to accelerate the convergence of sequences of such systems.

In RBiCG, we use the matrix $U$ to define the primary system recycle space, and compute $C = A^{(\iota+1)}U$, where $U$ is derived from an approximate right invariant subspace of $A^{(\iota)}$ and $\iota$ denotes the index of the linear system in the sequence of linear systems; see (\ref{eq:initial3}). Similarly, we use the matrix $\tilde{U}$ to define the dual system recycle space, and compute $\tilde{C} = A^{(\iota+1)*} \tilde{U}$, where $\tilde{U}$ is derived from an approximate left invariant subspace of $A^{(\iota)}$. $U$ and $\tilde{U}$ are computed such that $C$ and $\tilde{C}$ are bi-orthogonal (see page 35 of~\cite{ahuja11phd}). The number of vectors selected for recycling is denoted by $k$, and hence, $U$, $\tilde{U}$, $C$, and $\tilde{C} \in \mathbb{C}^{n \times k}$.

The bi-Lanczos algorithm was modified to compute the columns of $V_i$ and $\tilde{V}_i$ such that
\begin{equation*}\label{biorth}
    \left[C\ V_{i}\right] \perp_{b} \left[\tilde{C}\ \tilde{V}_{i}\right].
\end{equation*}
Using the scaling (\ref{scalingfactor}), we initialize the Lanczos vectors as
\begin{align*}
    \begin{array}
        [c]{cc}
        v_1 = \frac{\textstyle \left(I - C \mathcal{D}_c^{-1} \tilde{C}^*\right)r_0}{\textstyle \left|\left|\left(I - C \mathcal{D}_c^{-1} \tilde{C}^*\right)r_0\right|\right|}, & \tilde{v}_{1} = \frac{\displaystyle \left(I - \tilde{C} \mathcal{D}_c^{-1} C^*\right)\tilde{r_0}}{\displaystyle \left(v_{1}, \left(I - \tilde{C} \mathcal{D}_c^{-1} C^*\right)\tilde{r_0}\right)}.
    \end{array}
\end{align*}
Here $\mathcal{D}_c = \tilde{C}^* C$ is a diagonal matrix (implied by $C \perp_b \tilde{C}$; we also enforce $\mathcal{D}_c$ to have positive, real coefficients). As for the bi-Lanczos algorithm in (\ref{eq:biLanczosCompute}), the ($i+1$)-st Lanczos vectors here are given by
\begin{align*}
    \begin{array}
        [l]{lll}
	\gamma {v}_{i+1} & = & A v_i - V_i \tau - C \rho \perp \left[\tilde{C}\ \tilde{V}_i\right], \\
	\tilde{\gamma} \tilde{v}_{i+1} & = & A^* \tilde{v}_i - \tilde{V}_i \tilde{\tau} - \tilde{C} \tilde{\rho} \perp \left[{C}\ {V}_i\right], \\
    \end{array}
\end{align*}
where $\gamma$, $\tilde{\gamma}$, $\tau$, $\tilde{\tau}$, $\rho$, and $\tilde{\rho}$ are to be determined. The computation of the ($i+1$)-st Lanczos vector for the primary system now requires the $i$-th and ($i-1$)-st Lanczos vectors and $C$ (see~\cite{ahuja09ms}). This gives a ($3+k$)-term recurrence, where $k$ is the number of columns of $C$. Similarly, we get a ($3+k$)-term recurrence for computing the Lanczos vectors for the dual system. We refer to this pair of ($3+k$)-term recurrences as the augmented bi-Lanczos relations, and they are given by
\begin{align*}
	\begin{array}[l]{lll}
	(I-C \hat{C}^*)AV_i & = & V_{i+1}\underline{T}_i, \\[5pt]
	(I-\tilde{C}\check{C}^*)A^* \tilde{V}_i & = & \tilde{V}_{i+1}\tilde{\underline{T}}_i,
	\end{array}
\end{align*}
where
\begin{align}\label{eq:CcheckChat}
	\begin{array}[l]{lll}
	\hat{C}=\left[\frac{1}{c_{1}^*\tilde{c}_{1}}\tilde{c}_{1}\ \frac{1}{c_{2}^*\tilde{c}_{2}}\tilde{c}_{2}\ \cdots\ \frac{1}{c_{k}^*\tilde{c}_{k}}\tilde {c}_{k}\right] & = & \tilde{C}\mathcal{D}_c^{-1}, \\[5pt]
	\check{C}=\left[\frac{1}{\tilde{c}_{1}^*c_{1}}c_{1}\ \frac{1}{\tilde{c}_{2}^*c_{2}}c_{2}\ \cdots\ \frac{1}{\tilde{c}_{k}^*c_{k}}c_{k}\right] & = & {C}\mathcal{D}_c^{-1}.
	\end{array}
\end{align}

\begin{theorem}\label{thm:genToAug}
 	Let $v_{1} = \eta (I - C \mathcal{D}_c^{-1} \tilde{C}^*)r_0$, $\tilde{v}_1 =  \tilde{\eta}(I - \tilde{C} \mathcal{D}_c^{-1} C^*)\tilde{r}_0$, $B = (I - C \mathcal{D}_c^{-1} \tilde{C}^*)A$, and $\tilde{B} = (I - \tilde{C} \mathcal{D}_c^{-1} C^*)A^*$, where $\eta$, $\tilde{\eta}$ are scalars and $C$, $\tilde{C} \in \mathbb{C}^{n \times k}$ s.t. $\mathcal{D}_c = \tilde{C}^* C$ is a diagonal matrix with positive, real coefficients. 	Also, let (\ref{eq:findScalars}) be used as the set of constraints for (\ref{eq:generalRelations1}) and (\ref{eq:generalRelations2}). Then, $\beta_{ij} = 0$ and $\tilde{\beta}_{ij} = 0$  for $j > {i+1}$, which leads to the following short-term recurrences:
	\begin{align*}
	    \begin{array}
	    [l]{lll}
	    \beta_{i+1,i}{v}_{i+1} & = & {B}{v}_i - {\beta}_{ii}{v}_i -  {\beta}_{i-1,i}{v}_{i-1}, \\
	    \tilde{\beta}_{i+1,i}\tilde{v}_{i+1} & = & \tilde{B}\tilde{v}_i - \tilde{\beta}_{ii}\tilde{v}_i -  \tilde{\beta}_{i-1,i}\tilde{v}_{i-1},
	    \end{array}
	\end{align*}
	for $i \in \{1, 2, 3, \ldots, m-1\}$.
\end{theorem}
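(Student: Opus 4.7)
The plan is to derive the three-term recurrences by specializing Theorem \ref{thm:genBi} to the pair $(B,\tilde{B})$ defined in the statement. Accordingly, the task reduces to exhibiting matrices $C_k, \tilde{C}_k, F_k, \tilde{F}_k$ for which conditions (a), (b), (c) of Theorem \ref{thm:genBi} hold; the desired recurrences then follow automatically.

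The central algebraic step is condition (a). I would start by computing $\tilde{B}^*$. Because $\mathcal{D}_c$ is diagonal with positive real entries, $(\mathcal{D}_c^{-1})^* = \mathcal{D}_c^{-1}$, so
\begin{equation*}
\tilde{B}^* = A\bigl(I - C\mathcal{D}_c^{-1}\tilde{C}^*\bigr) = A - AC\mathcal{D}_c^{-1}\tilde{C}^*.
\end{equation*}
Combining with $B = A - C\mathcal{D}_c^{-1}\tilde{C}^*A$ gives
\begin{equation*}
B - \tilde{B}^* = \bigl(AC\mathcal{D}_c^{-1}\bigr)\tilde{C}^* - C\bigl(A^*\tilde{C}\mathcal{D}_c^{-1}\bigr)^*,
\end{equation*}
which is precisely of the form $\tilde{F}_k\tilde{C}_k^* - C_k F_k^*$ with the choices $\tilde{F}_k = AC\mathcal{D}_c^{-1}$, $\tilde{C}_k = \tilde{C}$, $C_k = C$, and $F_k = A^*\tilde{C}\mathcal{D}_c^{-1}$.

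With $C_k = C$ and $\tilde{C}_k = \tilde{C}$ fixed, conditions (b) and (c) follow by a short calculation using the identity $\tilde{C}^*C = C^*\tilde{C} = \mathcal{D}_c$ (the second equality again using that $\mathcal{D}_c$ is real). For (b), $\tilde{C}^*B = \tilde{C}^*A - \mathcal{D}_c\mathcal{D}_c^{-1}\tilde{C}^*A = 0$, and symmetrically $C^*\tilde{B} = 0$. For (c), applying the same cancellation to the oblique projectors defining $v_1$ and $\tilde{v}_1$ gives $\tilde{C}^*v_1 = 0$ and $C^*\tilde{v}_1 = 0$. Invoking Theorem \ref{thm:genBi} then yields $\beta_{ij} = \tilde{\beta}_{ij} = 0$ for $j > i+1$, which is exactly the short-term recurrence claim.

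The only subtle point is really the bookkeeping of conjugate transposes in computing $B - \tilde{B}^*$; in particular, the hypothesis that $\mathcal{D}_c$ has positive real coefficients is used precisely to ensure $\mathcal{D}_c^{-*} = \mathcal{D}_c^{-1}$ so that the off-diagonal terms in $B$ and $\tilde{B}^*$ share the same Hermitian-adjoint-compatible structure. Once that identification is made, both conditions (b) and (c) collapse to a single pattern of cancellation driven by $\tilde{C}^*C = \mathcal{D}_c$, and no further work is needed.
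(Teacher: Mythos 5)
Your proposal is correct and follows essentially the same route as the paper: both verify conditions (a)--(c) of Theorem \ref{thm:genBi} with the identifications $C_k = C$, $\tilde{C}_k = \tilde{C}$, $F_k = A^*\tilde{C}\mathcal{D}_c^{-1}$, $\tilde{F}_k = AC\mathcal{D}_c^{-1}$, and the same cancellations via $\tilde{C}^*C = \mathcal{D}_c$. Your explicit remark that the positive-real hypothesis on $\mathcal{D}_c$ is what gives $\mathcal{D}_c^{-*} = \mathcal{D}_c^{-1}$ (and hence $C^*\tilde{C} = \mathcal{D}_c$) is a point the paper leaves implicit, but it is the same argument.
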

\begin{proof}
	We show that conditions (a) -- (c) of Theorem \ref{thm:genBi} are satisfied. This demonstrates that augmented bi-Lanczos is a special case of generalized bi-Lanczos. We have $B, \tilde{B} \in \mathbb{C}^{n \times n}$ such that
	\begin{align*}
		\begin{array}
		[l]{lll}
		B - \tilde{B}^* & = & A -  C \mathcal{D}_c^{-1} \tilde{C}^*A - A + A C\mathcal{D}_c^{-1}\tilde{C}^* \\
					& = & \left(A C\mathcal{D}_c^{-1}\right)\tilde{C}^* -  C \left(A^*\tilde{C}\mathcal{D}_c^{-1}\right)^*.
		\end{array}
	\end{align*}
	Defining $F = A^*\tilde{C}\mathcal{D}_c^{-1}$ and $\tilde{F} = A C\mathcal{D}_c^{-1}$, we get
	$$B - \tilde{B}^* = \tilde{F} \tilde{C}^* - C F^* \quad \text{where} \quad C, \tilde{C}, F, \tilde{F}  \in \mathbb{C}^{n \times k}.$$
	Hence (a) is satisfied. For any $\tilde{x}$ consider the following:
	\begin{align*}
		\begin{array}
		[l]{lll}
		C^*\tilde{B}\tilde{x} & = & C^*(I - \tilde{C} \mathcal{D}_c^{-1} C^*)A^*\tilde{x}\\
						  & = & (C^* - \mathcal{D}_c \mathcal{D}_c^{-1} C^*)A^*\tilde{x} = 0.
		\end{array}
	\end{align*}
	Similarly, for any $x$ consider the following:
	\begin{align*}
		\begin{array}
		[l]{lll}
		\tilde{C}^*Bx & = & \tilde{C}^*(I - C \mathcal{D}_c^{-1} \tilde{C}^*)Ax\\
					& = & (\tilde{C}^* - \mathcal{D}_c \mathcal{D}_c^{-1} \tilde{C}^*)Ax = 0.
		\end{array}
	\end{align*}
Hence (b) is satisfied. Similarly, for $v_1$ and $\tilde{v}_1$ chosen in the theorem, $\tilde{C}^*v_1 = 0$ and $C^*\tilde{v}_1 = 0$. Hence, (c) is satisfied.
 \end{proof}

 For ease of future derivations, we introduce a slight change of notation. Let $x_{-1}$ and $\tilde{x}_{-1}$ be the initial guesses and $r_{-1} = b - A x_{-1}$ and $\tilde{r}_{-1} = \tilde{b} - A^* \tilde{x}_{-1}$ the corresponding initial residuals. We define
\begin{align}\label{minusXR}
    \begin{array}{lcr}
	    x_0 = x_{-1} + U \hat{C}^*r_{-1}, & \qquad & r_0 = (I - C\hat{C}^*)r_{-1}, \\
	    \tilde{x}_0 = \tilde{x}_{-1} + \tilde{U} \check{C}^*\tilde{r}_{-1}, & \qquad & \tilde{r}_0 = (I - \tilde{C}\check{C}^*)\tilde{r}_{-1},
    \end{array}
\end{align}
and follow this convention for $x_0$, $\tilde{x}_0$, $r_0$, and $\tilde{r}_0$ for the rest of the paper. Algorithm 1 gives the RBiCG algorithm from~\cite{ahuja12rbicg}. Here, we have not given details on how the recycle space is computed in RBiCG. For that we refer the reader to~\cite{ahuja12rbicg}.

Like BiCG, breakdowns can happen in RBiCG as well. Besides the breakdown in the underlying augmented bi-Lanczos algorithm ($\tilde{v}_i^*v_i$ = 0 at step $i$; called {\it serious breakdown}), a breakdown can happen when pivotless LDU decomposition of the  tridiagonal matrix (as in the augmented bi-Lanczos relations discussed earlier) does not exist. This is referred to as a breakdown of the {\it second kind}. 

The breakdown in the augmented bi-Lanczos algorithm can be avoided by using look-ahead strategies~\cite{freund:lookahead, gutnket1997survey} (as applied for the bi-Lanczos algorithm). The second breakdown can also be avoided in the same way as in BiCG. That is,  by performing the LDU decomposition with $2 \times 2$ block diagonal elements~\cite{bank1993csb}. 
 
\begin{figure}
{\bf Algorithm 1.} {\it RBiCG~\cite{ahuja12rbicg}} \\
    1. Given $U$ (also $C = AU$) and $\tilde{U}$ (also $\tilde{C} = A^* \tilde{U}$) s.t. $C \perp_b \tilde{C}$, compute $\check{C}$ and $\hat{C}$ using (\ref{eq:CcheckChat}).  If $U$ and $\tilde{U}$ are not available, then initialize $U$, $\tilde{U}$, $\check{C}$, and $\hat{C}$ to empty matrices. \\
    2. Choose $x_{-1}$, $\tilde{x}_{-1}$ and compute $x_0$, $\tilde{x}_0$, $r_0$, and $\tilde{r}_{0}$ using (\ref{minusXR}). \\
    3. {\bf if} $(r_0, \tilde{r}_0) = 0$ {\bf then} initialize $\tilde{x}_{-1}$ to a random vector. \\
    4. Set $p_0 = 0$, $\tilde{p}_0 = 0$, $\zeta_c = 0$, $\tilde{\zeta}_c = 0$, and $\beta_0 = 0$.
    Choose {\tt tol} and {\tt max\_itn}. \\
    5. {\bf for} $i = 1 \ldots$ {\tt max\_itn} {\bf do} \\
    \begin{tabular}[b]{ll}
    $\diamond$ \;  \;  $p_i = r_{i-1} + \beta_{i-1} p_{i-1}$; &
                       $\tilde{p}_i = \tilde{r}_{i-1} + \bar{\beta}_{i-1} \tilde{p}_{i-1}$ \\
    $\diamond$ \;  \;  $z_i = Ap_i$; &
                       $\tilde{z}_i = A^*\tilde{p}_i$ \\
    $\diamond$ \;  \;  $\zeta_i = \hat{C}^*z_i$; &
                       $\tilde{\zeta}_i = \check{C}^*\tilde{z}_i$ \\
    $\diamond$ \;  \;  $q_i = z_i - C\zeta_i$; &
                       $\tilde{q}_i = \tilde{z}_i - \tilde{C}\tilde{\zeta}_i$ \\
    $\diamond$ \;  \;  $\alpha_i = (\tilde{r}_{i-1}, r_{i-1}) / (\tilde{p}_i, q_i)$; &
                       $\tilde{\alpha}_i = \bar{\alpha}_i$ \\
    $\diamond$ \;  \;  $\zeta_c = \zeta_c + \alpha_i \zeta_i$; &
                       $\tilde{\zeta}_c = \tilde{\zeta}_c + \tilde{\alpha}_i \tilde{\zeta}_i$ \\
    $\diamond$ \;  \;  $x_i = x_{i-1} + \alpha_i p_i$; &
                       $\tilde{x}_i = \tilde{x}_{i-1} + \tilde{\alpha}_i \tilde{p}_i$ \\
    $\diamond$ \;  \;  $r_i = r_{i-1} - \alpha_i q_i$; &
                       $\tilde{r}_i = \tilde{r}_{i-1} - \tilde{\alpha}_i \tilde{q}_i$ \\
    \multicolumn{2}{l}{$\diamond$ \;  \;  {\bf if} $||r_i|| \leq $ {\tt tol} and
       $||\tilde{r}_i|| \leq $ {\tt tol} {\bf then break}} \\
    $\diamond$ \;  \;  $\beta_i = (\tilde{r}_i, r_i) / (\tilde{r}_{i-1}, r_{i-1})$ & \\
    \end{tabular} \\
    6. {\bf end for} \\
    7. $x_i = x_i - U \zeta_c$; \quad
       $\tilde{x}_i = \tilde{x}_i - \tilde{U}\tilde{\zeta}_c$
\end{figure}


\section{Recycling BiCGSTAB}\label{sec:rbicgstab}
In RBiCG~\cite{ahuja11phd, ahuja12rbicg}, the iteration vectors $p$, $\tilde{p}$, $r$, and $\tilde{r}$ are updated using the following recurrences:
\begin{align*}
	\begin{array}
	[l]{llllll}
    p_i & = & r_{i-1} + \beta_{i-1} p_{i-1}, & \tilde{p}_i & = & \tilde{r}_{i-1} + \tilde{\beta}_{i-1} \tilde{p}_{i-1}, \\
    r_i & = & r_{i-1} - \alpha_i B p_i, & \tilde{r}_i & = & \tilde{r}_{i-1} - \tilde{\alpha}_i \tilde{B}\tilde{p}_i,
    \end{array}
\end{align*}
where $B = (I - C\hat{C}^*)A$ and $\tilde{B} = (I - \tilde{C}\check{C}^*)A^*$. We first give the polynomial representations of these iteration vectors.
\begin{theorem}\label{thm:polyExp}
    Let $r_i$, $p_i$, $\tilde{r}_i$, and $\tilde{p}_i$ be defined as above. Then, for the primary system
    \begin{equation*}\label{eq:polynomialRep}
        r_i = \Theta_i(B)r_0, \quad p_i = \Pi_{i-1}(B)r_0,
    \end{equation*}
    where $\Theta_i(K)$ and $\Pi_{i-1}(K)$ are $i$-th and $(i-1)$-st degree polynomials, for an arbitrary square matrix $K$, that satisfy the following recurrences: 
    \begin{align*}
        \Theta_i(K) = \Theta_{i-1}(K) - \alpha_i K \Pi_{i-1}(K), \\
        \Pi_{i-1}(K) = \Theta_{i-1}(K) + \beta_{i-1} \Pi_{i-2}(K).
    \end{align*}
    Similarly, for the dual system
\begin{equation*}
 	\tilde{r}_i = \bar{\Theta}_i(\tilde{B})\tilde{r}_0, \quad \tilde{p}_i = \bar{\Pi}_{i-1}(\tilde{B})\tilde{r}_0,
\end{equation*}
where $\bar{\Theta}_i(K)$ and $\bar{\Pi}_i(K)$ satisfy the following recurrences:
    \begin{align*}
        \bar{\Theta}_i(K) = \bar{\Theta}_{i-1}(K) - \bar{\alpha}_i K\bar{\Pi}_{i-1}(K), \\
        \bar{\Pi}_{i-1}(K) = \bar{\Theta}_{i-1}(K) + \bar{\beta}_{i-1} \bar{\Pi}_{i-2}(K).
    \end{align*}
\end{theorem}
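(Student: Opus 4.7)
The plan is to proceed by induction on $i$, establishing the polynomial representations for $r_i$ and $p_i$ simultaneously. The base case follows directly from the initialization: $p_0 = 0$ and $r_0 = r_0$, together with $p_1 = r_0 + \beta_0 p_0 = r_0$, suggest taking $\Theta_0(K) = I$ and $\Pi_0(K) = I$, which satisfies the stated recurrences trivially (with $\Pi_{-1}(K) = 0$ by convention).

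For the inductive step, assume $r_{i-1} = \Theta_{i-1}(B)r_0$ and $p_{i-1} = \Pi_{i-2}(B)r_0$. First I would handle $p_i$: substituting the inductive hypothesis into $p_i = r_{i-1} + \beta_{i-1} p_{i-1}$ yields
\begin{equation*}
p_i = \Theta_{i-1}(B)r_0 + \beta_{i-1}\Pi_{i-2}(B)r_0 = \bigl[\Theta_{i-1}(B) + \beta_{i-1}\Pi_{i-2}(B)\bigr]r_0,
\end{equation*}
which matches the defining recurrence for $\Pi_{i-1}(K)$ evaluated at $K=B$, so $p_i = \Pi_{i-1}(B)r_0$. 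Next, substituting into $r_i = r_{i-1} - \alpha_i B p_i$ gives
\begin{equation*}
r_i = \Theta_{i-1}(B)r_0 - \alpha_i B \Pi_{i-1}(B)r_0 = \bigl[\Theta_{i-1}(B) - \alpha_i B \Pi_{i-1}(B)\bigr]r_0,
\end{equation*}
which is exactly $\Theta_i(B)r_0$. Since the polynomials in $B$ commute, no subtlety arises in moving $B$ past $\Pi_{i-1}(B)$.

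The dual case is structurally identical, but requires tracking complex conjugation carefully. From Algorithm 1, $\tilde{\alpha}_i = \bar{\alpha}_i$ and analogously $\tilde{\beta}_{i-1} = \bar{\beta}_{i-1}$, so the dual recurrences read $\tilde{p}_i = \tilde{r}_{i-1} + \bar{\beta}_{i-1}\tilde{p}_{i-1}$ and $\tilde{r}_i = \tilde{r}_{i-1} - \bar{\alpha}_i \tilde{B}\tilde{p}_i$. Running the same induction with these conjugated scalars replacing $\beta_{i-1}$ and $\alpha_i$ produces polynomials whose coefficients are precisely the conjugates of those in $\Theta_i$ and $\Pi_{i-1}$; this is exactly what the notation $\bar{\Theta}_i(K)$ and $\bar{\Pi}_{i-1}(K)$ denotes.

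There is no real obstacle here; the argument is purely bookkeeping. The only point that warrants a brief explicit mention is the handling of the base case at $i=1$ (since $\Pi_{-1}$ does not appear in the statement), and the observation that commutativity of polynomials in a single matrix is what allows the factorization $B\Pi_{i-1}(B) = \Pi_{i-1}(B)B$ to pass through cleanly in any later use of this result.
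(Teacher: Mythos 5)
Your proof is correct and follows essentially the same route as the paper, which simply defers to the inductive derivation in Sonneveld's CGS paper (with $B$ in place of $A$); you have written out that induction explicitly, including the base case and the conjugation bookkeeping for the dual recurrences. Nothing further is needed.
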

\begin{proof}
	This can be proved by induction, following the derivation in~\cite{sonneveld89cgs} (Section 2; pages 37--40), but use $B$ instead of $A$ and $\tilde{B}$ instead of $A^*$.
\end{proof}

From RBiCG we know $r_i \perp \tilde{r}_j$ for $j<i$. Using Theorem \ref{thm:polyExp} we get that
\begin{equation*}
	(\bar{\Theta}_j(\tilde{B})\tilde{r}_0, \Theta_i(B)r_0) = 0 \text{ for } j < i.
\end{equation*}
This implies $\Theta_{i}(B)r_0$ $\perp$ $\mathcal{K}^i(\tilde{B}, \tilde{r}_{0})$, where $\tilde{r}_0$, $\tilde{B}\tilde{r}_0$, $\ldots$, $\tilde{B}^{i-1}\tilde{r}_0$ span the subspace $\mathcal{K}^i(\tilde{B}, \tilde{r}_{0})$. As observed in~\cite{vorst92bicgstab}, the above orthogonality conditions must be satisfied by other bases of $\mathcal{K}^i(\tilde{B}, \tilde{r}_{0})$, too. So, other polynomials can be used as well~\cite{zhang97gbicgstab}. That is,
\begin{align}\label{newPolyEquation}
	(\bar{\Omega}_j(\tilde{B})\tilde{r}_0, \Theta_i(B)r_0) = 0 \text{ for } j < i.
\end{align}
Similar to the derivation in~\cite{vorst92bicgstab}, we define
\begin{align*}
    	\begin{array}
        [l]{lll}
	\bar{\Omega}_i(\tilde{B}) & = & (I-\bar{\omega}_1 \tilde{B})(I-\bar{\omega}_2 \tilde{B}) \cdots (I-\bar{\omega}_i \tilde{B}),
    	\end{array}
\end{align*}
where $\omega_i$ is selected to minimize the residual $r_i$ w.r.t. $\omega_i$. Then, as first proposed in~\cite{sonneveld89cgs}, instead of (\ref{newPolyEquation}), we use the following form of inner product:
\begin{equation*}
	(\tilde{r}_0, {\Omega}_j(B)\Theta_i(B)r_0) = 0 \text{ for }  j < i,
\end{equation*}
with
\begin{align*}
    	\begin{array}
        [l]{lll}
	{\Omega}_i({B}) & = & (I-{\omega}_1 {B})(I-{\omega}_2 {B}) \cdots (I-{\omega}_i {B}).
    	\end{array}
\end{align*}
This inner product does not require the transpose of $B$, and hence, is appropriate when there is no dual system to solve. Computing the inner product in this fashion, we obtain the recycling BiCGSTAB algorithm (similar to the way BiCGSTAB is obtained from BiCG in~\cite{vorst92bicgstab}). We term our recycling BiCGSTAB as RBiCGSTAB. The algorithm is given in Algorithm 2. Some algorithmic improvements to make the code faster (similar to those discussed in section 6.2 of~\cite{mello2010opt}) are not given here.

Breakdowns in RBiCG (as discussed in the end of Section 3), lead to breakdowns in the RBiCGSTAB algorithm as well. This is similar to how breakdowns in BiCG lead to breakdowns in BiCGSTAB. A breakdown free BiCGSTAB algorithm is proposed in~\cite{cao1993breakdownbicgstab}, which uses the theory of formal orthogonal polynomials. The same theory can be applied to the RBiCGSTAB algorithm. 

The BiCGSTAB algorithm also breaks down when the minimization with respect to $\omega_i$ fails. This problem can be avoided by minimizing in two or more dimensions. This led to the development of BiCGSTAB2~\cite{gutnket93bicgstab2} and BiCGSTAB({\em l})~\cite{sleijpen93bicgstabl}. In~\cite{ahuja11phd}, a Recycling BiCGSTAB2 that can be extended to a Recycling BiCGSTAB({\em l}) is proposed.

\begin{figure}
{\bf Algorithm 2.} {\it RBiCGSTAB} \\
    1. Given $U$ (also $C = AU$) and $\tilde{U}$ (also $\tilde{C} = A^* \tilde{U}$) s.t. $C \perp_b \tilde{C}$, compute $D_c = \tilde{C}^*C$ and $\hat{C} = \tilde{C}\mathcal{D}_c^{-1}$. \\
    2. Choose $x_{-1}$; initialize $\tilde{r}_{-1}$ to a random vector; and compute $x_0$, $r_0$, $\tilde{r}_{0}$ using (\ref{minusXR}). \\
    3. {\bf if} $(r_0, \tilde{r}_0) = 0$ {\bf then} rechoose $x_{-1}$ or reinitialize $\tilde{r}_{-1}$ to avoid this condition. \\
    4. Set scalars $\beta_0$ and $\omega_0$ as well as vectors $p_0$, $q_0$, and $x_c$ to zero. \\
    5. Choose {\tt tol} and {\tt max\_itn}. \\
    6. {\bf for} $i = 1 \ldots$ {\tt max\_itn} {\bf do} \\
    \begin{tabular}[b]{l}
    	   $\diamond$\; \; $ p_i = r_{i-1} + \beta_{i-1}p_{i-1} - \beta_{i-1}\omega_{i-1}q_{i-1} $ \\
    	   $\diamond$\; \; $ q_i = Ap_i$ \\
    	   $\diamond$\; \; $ \zeta_i = \hat{C}^*q_i$ \\
    	   $\diamond$\; \; $ q_i = q_i - C \zeta_i$ \\
	   $\diamond$\; \; $ \alpha_{i} = \frac{\left(\tilde{r}_0,\ r_{i-1}\right)}{\left(\tilde{r}_0,\ q_i\right)} $ \\
	   $\diamond$\; \; $ s_i = r_{i-1} - \alpha_i q_i $ \\
	   $\diamond$\; \; $ t_i = A s_i $ \\
	   $\diamond$\; \; $ \gamma_i  = \hat{C}^*t_i $ \\
	   $\diamond$\; \; $ t_i = t_i - C \gamma_i$ \\
	   $\diamond$\; \; $ \omega_i = \frac{(s_i,t_i)}{(t_i,t_i)} $ \\
	   $\diamond$\; \; $ x_i = x_{i-1} + \alpha_i p_i + \omega_i s_i $ \\
	   $\diamond$\; \; $ x_c = x_c + \alpha_i \zeta_i + \omega_i \gamma_i $ \\
	   $\diamond$\; \; $ r_i = r_{i-1} - \alpha_i q_i - \omega_i t_i $ \\
	   $\diamond$ \;  \;  {\bf if} $||r_i|| \leq $ {\tt tol} {\bf then break} \\
	   $\diamond$\; \; $ \beta_{i} = \frac{(\tilde{r}_0,\ r_{i})}{(\tilde{r}_0,\ r_{i-1})} \cdot \frac{\alpha_{i}}{\omega_{i}} $ \\
    \end{tabular} \\
    7. {\bf end for} \\ 
    8.  $x_i = x_i - U x_c$ \\
    \small{Note: When A does not change for multiple systems, several changes should be made to make this algorithm substantially more efficient. This is done in our implementation for the model reduction problem of Section \ref{sec:experiments}.}
\end{figure}


\section{Numerical Experiments}\label{sec:analysis}
For BiCG, it has been shown that including a left eigenvector into the search space leads to the removal of the corresponding right eigenvector from the right residual (and vice versa)~\cite{desturler99householder}. In our experiments we demonstrate that recycling a left invariant subspace may improve the convergence rate in the RBiCGSTAB algorithm. We consider two examples. For the first example, we perform vertex centered finite volume discretization of the PDE
\begin{align*}
	\mathsf{-(u_x)_x - (u_y)_y + 10u_x -10u_y = 0},
\end{align*}
on a $42 \times 42$ grid unit square resulting in a $1600 \times 1600$ linear system. We use the following boundary conditions: $\mathsf{u}_\text{south} = 1, \mathsf{u}_\text{west} = 1, \mathsf{u}_\text{north} = 0, \mathsf{u}_\text{east} = 0$. We do not use a preconditioner in this example, the initial guess is a vector of all ones, and the relative convergence tolerance is $10^{-10}$.

For the second example, we perform finite difference discretization of the partial differential equation~\cite{vorst92bicgstab}
\begin{align*}
    \mathsf{-({A}v_x)_x - ({A}v_y)_y + {B}(x,y)v_x = {F}},
\end{align*}
with $\mathsf{A}$ as shown in Figure \ref{figure:secondpde}, $\mathsf{B(x,y) = 2e^{2(x^2 + y^2)}}$, and $\mathsf{F} = 0$ everywhere except in a small square in the center where $\mathsf{F} = 100$ (see Figure \ref{figure:secondpde}). The $(0,1) \times (0,1)$ domain is discretized on a $129 \times 129$ grid resulting in a  $16129 \times 16129$ linear system. We use the following boundary conditions:
\begin{align*}
    \begin{array}
    [l]{l}
    \mathsf{v(0, y) = v(1, y) = v(x, 0) = 1}, \\
    \mathsf{v(x, 1) = 0}.
    \end{array}
\end{align*}
We use an ILUTP~\cite{saad03book} preconditioner with a drop tolerance of $0.1$ (split-preconditioned). The initial guess is $0.5$ times a vector of all ones, and the relative convergence tolerance is $10^{-8}$.

\begin{figure}
    \centering
    \includegraphics[angle=90,scale=0.5]{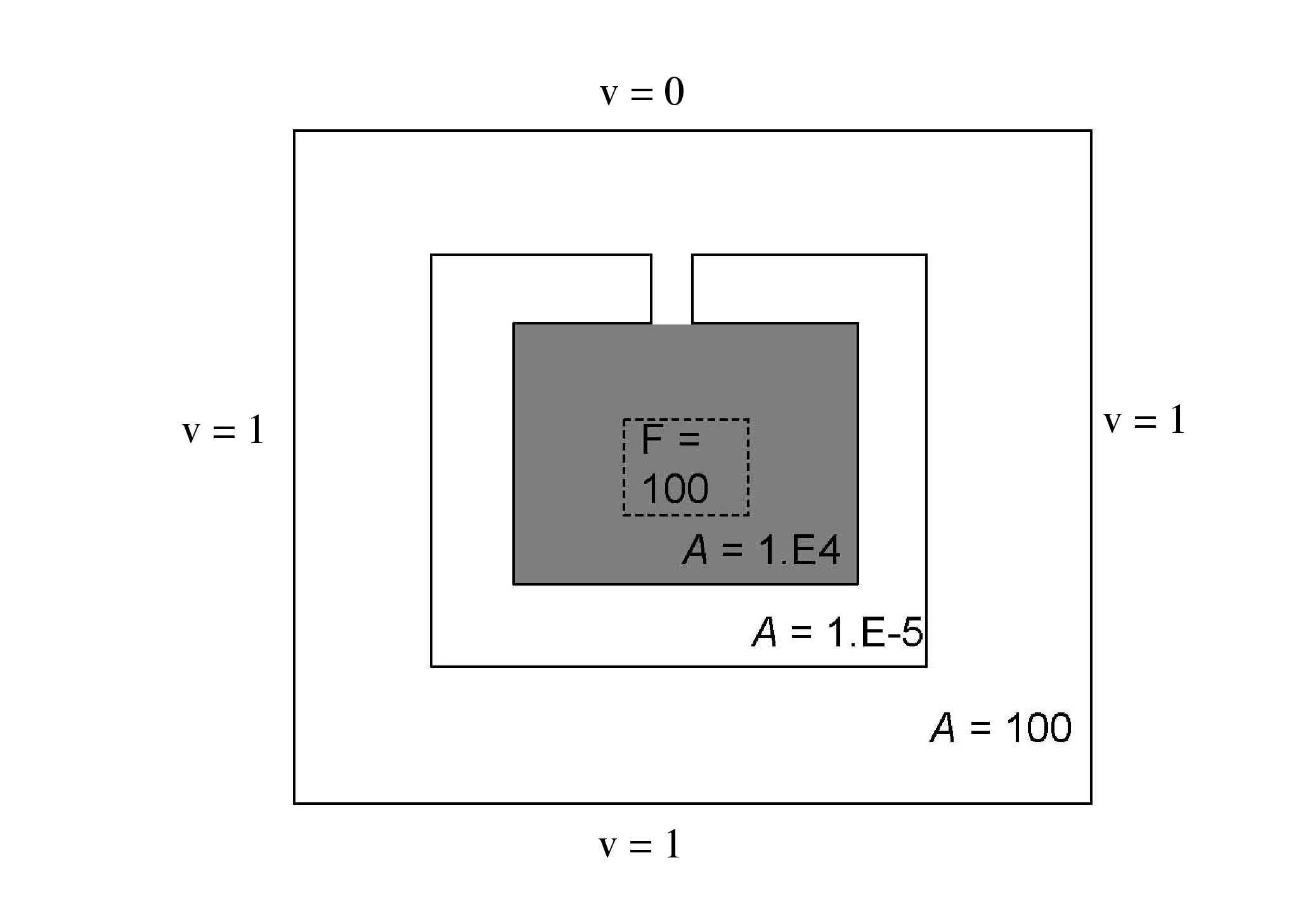}
    \caption{Coefficients for the PDE.}\label{figure:secondpde}
\end{figure}

\begin{figure}
    \subfigure[Example 1: Left eigenvectors not needed, but recycling effective.]{
        	\scalebox{0.6}{\includegraphics{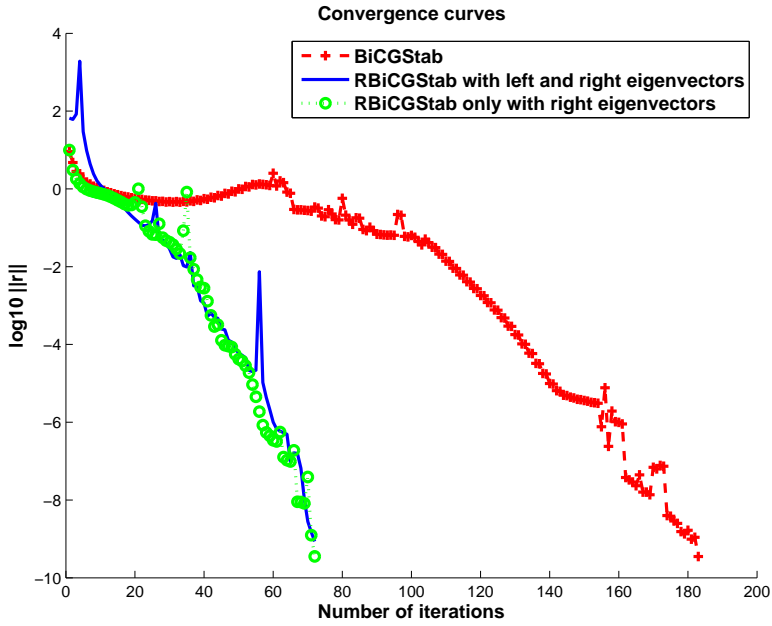}}
    }
    \subfigure[Example 2: Left eigenvectors needed for recycling to be effective.]{
        	\scalebox{0.6}{\includegraphics{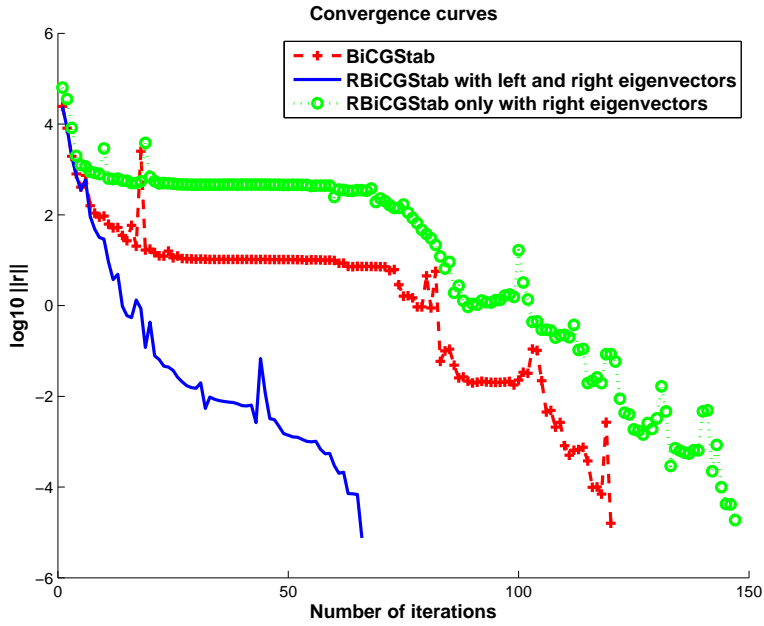}}
    }
    \caption{Convergence curves for two examples using RBiCGSTAB. The 2nd example demonstrates that recycling a left invariant subspace may improve the convergence rate in the RBiCGSTAB algorithm.}\label{figure:needLeftEigVecs}
\end{figure}

For each example we do three experiments. First, we solve the system without recycling. Second, we use the right invariant subspace (corresponding to the smallest magnitude eigenvalues) as the recycle space. This is implemented by setting $\tilde{U} = U$. Finally, we use both the left and right invariant subspaces (again, corresponding to the smallest magnitude eigenvalues) as the recycle space.

For the first example's second set of experiments, we use five exact right eigenvectors computed using the MATLAB function {\tt eigs}. For the first example's third set of experiments, we use five exact left eigenvectors and five exact right eigenvectors (for a total of ten), again computed using the MATLAB function {\tt eigs}.

For the second example's second set of experiments, we use twenty approximate right eigenvectors obtained by solving the problem twice with RBiCG. For the second example's third set of experiments, we use twenty approximate left eigenvectors and twenty approximate right eigenvectors (for a total of forty), again obtained by solving the problem twice with RBiCG.

The results are shown in Figures \ref{figure:needLeftEigVecs} (a) and (b). For the first example, using the right invariant subspace or using both the left and right invariant subspaces works equally well. However, for the second example, we see that using only the right invariant subspace leads to convergence that is worse than BiCGSTAB without recycling, and much worse than RBiCGSTAB using both the left and right invariant subspaces. This shows that recycling a left invariant subspace may improve the convergence rate in the RBiCGSTAB algorithm. 

Next, we analyze why the first example does not need left invariant subspace and the second example does, by considering the cosines of the principal angles between the left and right invariant subspaces associated with the ten smallest magnitude eigenvalues. Table \ref{table:analysis} lists these angles.

\begin{table}[tbh]
   \footnotesize
    \begin{center}
        \setlength{\extrarowheight}{1.5pt}
        \begin{tabular}{|m{1in}|m{1in}|}
            \hline
            {Example 1} & {Example 2} \\         	   
            \hline
	   0.9998	&	0.1039    \\
	   0.9837	&	0.0302    \\
	   0.9207	&	0.0195    \\
	   0.7617	&	0.0106	  \\
	   0.4352	&	0.0089	  \\
	   0.3987	&	0.0049	  \\
	   0.2273	&	0.0043	  \\
	   0.0963	&	0.0027	  \\
	   0.0235	&	0.0018	  \\
	   0.0064	&	0.0012	  \\
          \hline 
        \end{tabular}
    \end{center}
         \quad \\
    \caption{\label{table:analysis} For each example, we give the cosines of the principal angles between the  exact left and right invariant subspaces of dimension $10$, associated with the smallest magnitude eigenvalues. For the first example, we compute invariant subspaces of the matrix obtained after discretization. While for the second example, we compute invariant subspaces of the preconditioned matrix since we split-precondition the linear system obtained after discretization.}
\end{table}

From the table, we see that the principal angles between the left and right invariant subspaces for the second example are substantially larger than those for the first example (since the cosines of the principal angles is lesser for second example as compared with the first example). Since for a normal matrix left and right invariant subspaces are identical, we conclude that the second example is ``more'' non-normal than the first.    

\section{Applications}\label{sec:appl}
We first discuss several techniques for parametric model order reduction and the one we are using (in Section \ref{sec:pmor}) followed by the description of how recycling BiCGSTAB is applied for sequences of linear systems arising in parametric model order reduction (in Section \ref{sec:experiments}). 

\subsection{Parametric Model Order Reduction}\label{sec:pmor}
Numerical simulation is an essential tool for solving science and engineering problems. However, simulating large-scale models leads to overwhelming demands on computational resources. This is the main motivation for model reduction. The goal is to produce a surrogate model of much smaller dimension that provides a high-fidelity approximation of the input-output behavior of the original model. Often the models have design parameters associated with them, e.g., boundary conditions, geometry, material properties etc. Changes in these design parameters require generation of new reduced models, which makes the model reduction process very cumbersome. One practical application where such a challenge arises is micro-electro-mechanical systems (MEMS) design~\cite{feng2012Mems, feng2013book}. The goal of parametric model order reduction (PMOR)~\cite{baur2011PMOR,
feng:proceedings} is to generate a reduced model such that parametric dependence, as in the original model, is preserved (or retained).

We focus on physical processes that are modeled as parameterized partial differential equations (PDEs). For PMOR, the PDE is first semi-discretized using classical techniques (e.g., finite differences, finite elements, etc.), and then model reduction is applied to the resulting parameterized state-space model~\cite{baur2011PMOR}:
\begin{align}\label{eq:dynamicalSys1}
    \begin{array}
        [l]{ll}%
        \mathsf{G}:
        \begin{cases}
           \mathsf{E(p)\ {\dot x}(t) = A(p)\ x(t) + B(p)\ u(t)} \\
            \mathsf{y(t) = C(p)\ x(t)}
        \end{cases}
    \end{array}
\end{align}
or
\begin{align}\label{eq:dynamicalSys2}
    \mathsf{G(s, p) = C(p)\left(sE\left(p\right) - A\left(p\right)\right)^{-1}B(p)}.
\end{align}
Sometimes, the physical process is directly available in the form of a parametrized state-space model. Equations (\ref{eq:dynamicalSys1}) -- (\ref{eq:dynamicalSys2}) represent a multiple input multiple output (MIMO) linear dynamical system, where $\mathsf{p}$ is the parameter vector; $\mathsf{u(t) = [u_1(t), \ldots}$, $\mathsf{u_m(t)]^T}$: $\mathsf{\mathbb{R} \rightarrow \mathbb{R}^m}$ is the input; $\mathsf{y(t)}$: $\mathsf{\mathbb{R} \rightarrow \mathbb{R}^l}$ is the output; $\mathsf{x(t)}$: $\mathsf{\mathbb{R} \rightarrow \mathbb{R}^n}$ is the state vector; $\mathsf{E(p)}$, $\mathsf{A(p) \in \mathbb{R}^{n \times n}}$, $\mathsf{B(p) \in \mathbb{R}^{n \times m}}$, and $\mathsf{C(p) \in \mathbb{R}^{l \times n}}$ are the system matrices; and $\mathsf{s}$ is the frequency domain variable corresponding to $\mathsf{t}$ in the time domain.

Above, (\ref{eq:dynamicalSys1}) denotes the dynamical system, and (\ref{eq:dynamicalSys2}) gives the transfer function of the system obtained after Laplace transformation. By a common abuse of notation, we denote both with $\mathsf{G}$. The dimension of the underlying state-space, $\mathsf{n}$, is called the dimension or order of $\mathsf{G}$. Unless explicitly stated, for the rest of this paper all dynamical systems are assumed to be of the above type.

There are various ways of performing PMOR~\cite{benner2013survey, baur2011PMOR, patera06reducedbasis, benner2014moment}. This includes moment matching, local $\Htwo$-optimality, and reduced basis approaches. For this work, we focus on moment matching based PMOR because of its flexibility (few limits on the system properties) and low computational cost in many industrial applications.

%
%

Moment matching based PMOR algorithms~\cite{benner2014moment, feng2012Mems} require solution of sequences of linear systems of the type (\ref{eq:initial3}), which is a key bottleneck when using these algorithms for reducing larger models. Specifically, the systems have the form as follows: 
\begin{align*}
    {A}^{(1)}\ x^{(1, 1)} & = {b}^{(1, 1)} \\
    {A}^{(1)}\ x^{(1, 2)} & = {b}^{(1, 2)} \\
    {A}^{(1)}\ x^{(1, 3)} & = {b}^{(1, 3)} \\
    & \vdotswithin{=}   \\
    {A}^{(2)}\ x^{(2, 1)} & = {b}^{(2, 1)} \\
    {A}^{(2)}\ x^{(2, 2)} & = {b}^{(2, 2)} \\
    {A}^{(2)}\ x^{(2, 3)} & = {b}^{(2, 3)} \\
    & \vdotswithin{=}  
\end{align*} 
In the example used here, each matrix has the form
\begin{align*}
{A}^{(i)} = A_0 + \sigma_iA_1 + \delta_i A_2 \quad \text{with} \quad i \in \mathbb{N}.
\end{align*}
Please note that $\sigma_iA_1$ and $\delta_i A_2$ are small perturbations to $A_0$. As discussed in the introduction, our goal here is to use Krylov subspace recycling (recycling BiCGSTAB specifically) to efficiently solve such systems.



\subsection{Application to PMOR}\label{sec:experiments}
Our test dynamical system comes from a silicon nitride membrane model~\cite{bechtold2010SiN}. Such a membrane can be part of many devices, e.g., a gas sensor chip, a microthruster, an optical filter etc.
We use the moment matching based PMOR algorithm, described in~\cite{benner2014moment, feng2012Mems}, to compute a reduced model. This leads to a sequence of linear systems of the form (\ref{eq:initial3}) and size $60,020$.

Whenever the matrix changes in the sequence, we call RBiCG to perform the linear solve. This helps to approximate both left and right invariant subspaces, which are not easily available from the RBiCGSTAB iterations (sometimes a left invariant subspace is available from a right invariant subspace~\cite{abdel2007lefteig, morgan2010nlandr}). The primary system right-hand side comes from the PDE. We take a vector of all ones as the dual system right-hand side. We call RBiCGSTAB for all remaining systems with the same matrix. This corresponds to linear systems where only the right-hand sides change. This is an effective strategy because it has been shown that the recycle space can be useful for multiple consecutive systems~\cite{parks06gcrodr, kilmer2006dot, morgan2010nlandr,  abdel2014rhs}. Moreover, using RBiCG for all systems will be expensive since an unnecessary dual system would be solved at each step in the sequence. It needs to be emphasized here that these is a need to implement this efficiently (as discussed in Algorithm 2). 

While solving a linear system with RBiCG, Lanczos vectors are generated at each iterative step. These Lanczos vectors are used to build the recycle space. We have the flexibility in deciding when to build the recycle space. One option is to wait for the RBiCG to converge, save all the Lanczos vectors, and then build the recycle space. The problem with this approach is that this requires large amounts of memory and may be computationally expensive as well. Instead, we divide the RBiCG iteration in cycles of a certain number of iterations (to be chosen, e.g., $50$ iterations). At the end of each cycle, we use the stored Lanczos vectors from that cycle to build or improve the recycle space, and then discard these Lanczos vectors, except for the last few that are needed to continue the Lanczos iteration. At the end of the first cycle, we may build a new recycle space or update a recycle space constructed for a previous linear system. In the following discussion, we denote the length of a cycle by $s$.

For this experiment, we take $s = 25$ and $k = 20$ (the number of vectors selected for recycling as defined earlier in Section 3). These values are chosen based on experience with other recycling algorithms~\cite{parks06gcrodr}. The linear systems are split-preconditioned with an incomplete LU preconditioner with threshold and pivoting (ILUTP)~\cite{saad03book}. The drop tolerance is taken as $10^{-4}$. For RBiCG, we take a vector of all zeros as the initial guess for both the primary system and the dual system. For RBiCGSTAB we take a vector of all zeros as the initial guess as well (we are only solving the primary system in RBiCGSTAB). Using the solution from the previous linear system an initial guess leads to poor convergence for BiCGSTAB with and without recycling. In general, a better initial guess may be based on knowledge of the system. The relative convergence tolerance for the iterative solves is taken as $10^{-8}$.

%
%

The number of matrix-vector products required to solve systems $1$ through $63$ are given in Figure \ref{figure:matvecs}, and the corresponding timing data is given in Figure \ref{figure:time}. In both the figures, the peaks in the recycling BiCGSTAB plot correspond to when the matrix changes and RBiCG is called (three times; at the 1st, 22nd, and 43rd linear system). For all other steps, when only the right-hand side changes, RBiCGSTAB is called. When recomputing the recycle space for the 22nd system we use the recycle space generated while solving the 1st system, and make it better. This is evident in Figure 3 and 4 where both the number of matrix-vector products and time for solving the 22nd linear system are less as compared with the 1st system. The same process happens when recomputing the recycle space for the 43rd system. Here, the recycle space from the 22nd linear system is improved. Again, the Figures 3 and 4 show the decrease in the matrix-vector product count and time for the 43rd system as compared with the 22nd system.

First, we compare our results with BiCGSTAB. The RBiCG and RBiCGSTAB combination requires about 40\% fewer matrix-vector products in total. Also, computing the reduced model with recycling takes about 35\% less time than without recycling. This demonstrates the effectiveness of recycling Krylov subspaces for PMOR.

Second, we compare our results with GCRO-DR. Although the RBiCG and RBiCGSTAB combination does not beat GCRO-DR in the number of matrix vector products, it is 10\% more efficient than GCRO-DR in time and reduces storage requirements. This is similarly seen in BiCGSTAB and GMRES comparison for some examples. That is, although BiCGSTAB is expensive than GMRES in the number of matrix vector products, it is cheaper than GMRES in time. The reason being that BiCGSTAB is cheaper than GMRES with respect to the orthogonalizations.

\begin{figure}
    \centering
    \includegraphics[scale=0.7]{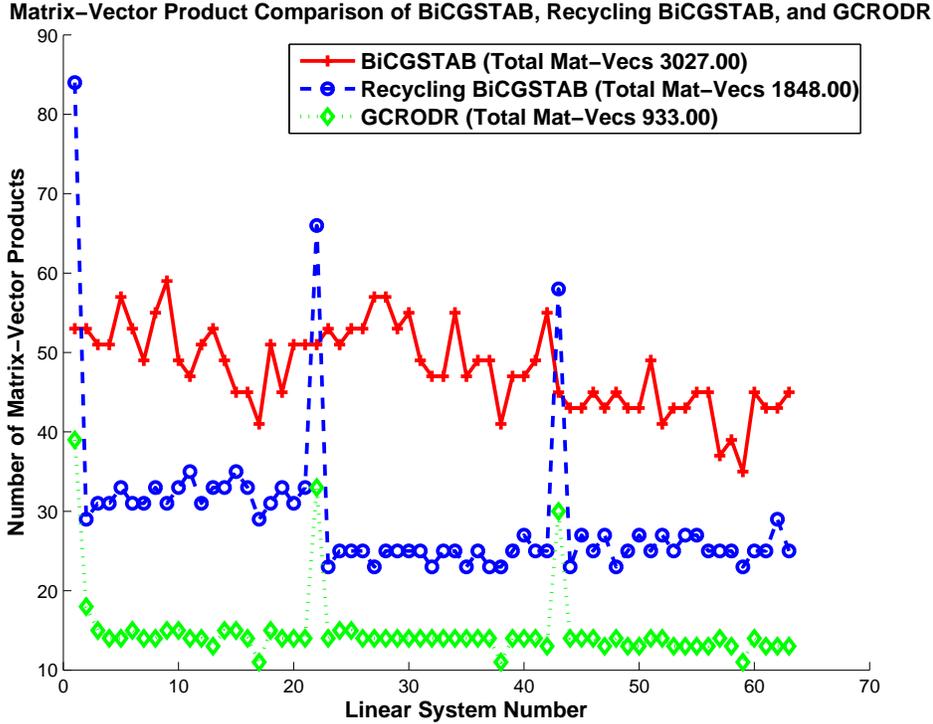}
    \caption{Comparison of matrix-vector product count when using BiCGSTAB, RBiCGSTAB, and GCDR-DR as the linear solvers for PMOR.}\label{figure:matvecs}
\end{figure}

\begin{figure}
    \centering
    \includegraphics[scale=0.7]{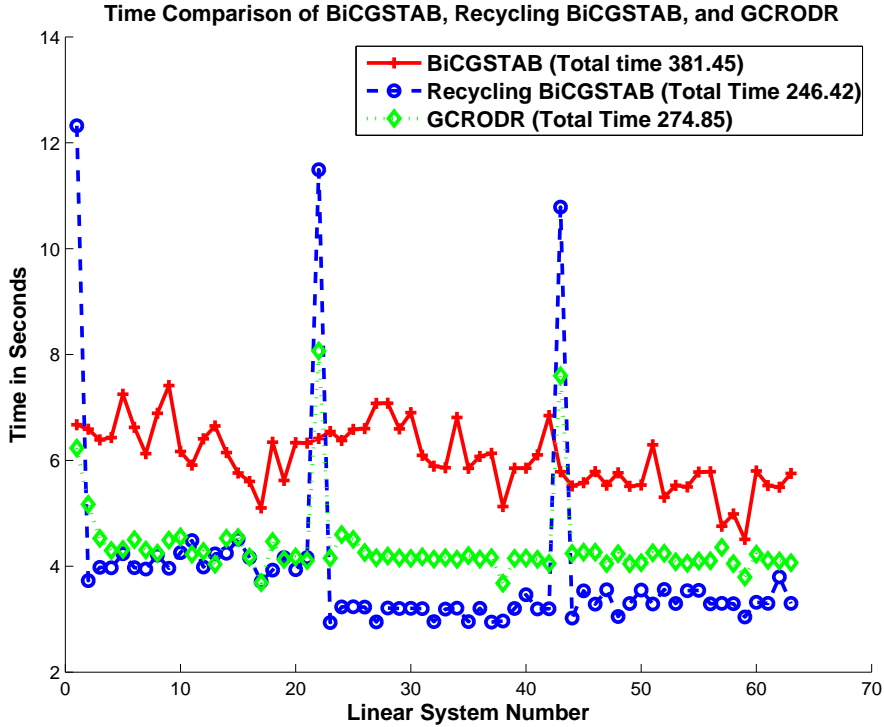}
    \caption{Comparison of time (in seconds) when using BiCGSTAB, RBiCGSTAB, and GCRO-DR as the linear solvers for PMOR.}\label{figure:time}
\end{figure}


\section{Conclusions}\label{sec:conclusion}

For solving linear systems with non-symmetric matrices, B\-i\-C\-G\-S\-T\-A\-B is one of the best available algorithms. As compared with GMRES, which is the most commonly used algorithm for such linear systems, it has the advantage of a short term recurrence, and hence, does not suffer from storage issues.

For solving sequences of linear systems with non-symmetric matrices, it is advantageous to use Krylov subspace recycling for the BiCGSTAB algorithm, and hence we propose the RBiCGSTAB algorithm. We have demonstrated the usefulness of RBiCGSTAB for a parametric model order reduction example.

Here, we have used RBiCG to solve a linear system and generate the recycle space. We have then used RBiCGSTAB, that uses this generated space, to solve the subsequent systems until the matrix changes. RBiCG in this approach can be replaced with other solvers like GCRO-DR~\cite{parks06gcrodr} as well.

In the future, we plan to test RBiCGSTAB for other application areas (e.g., acoustics problems). We also plan to extend the recycling framework of RBiCGSTAB to BiCGSTAB({\em l})~\cite{sleijpen93bicgstabl} and IDR~\cite{wesseling80idr, sonneveld2008idr}.
In section \ref{sec:rbicgstab}, we saw that BiCGSTAB (and RBiCGSTAB) performs one-dimensional minimization of the residual. This minimization can be done in higher dimensions as well (say {\em l}), leading to BiCGSTAB({\em l})\footnote{One-dimensional minimization was first extended to two dimensions in~\cite{gutnket93bicgstab2}}. Like BiCGSTAB, the induced dimension reduction (IDR) method involves a short term recurrence and has been shown to perform better than BiCGSTAB in many cases~\cite{sonneveld2008idr}.

\vspace{6pt}
\noindent
{\bf Acknowledgments.} We thank the anonymous reviewers for their careful and helpful suggestions, which greatly helped us to improve this paper.

\bibliographystyle{abbrv}
\bibliography{rbicgstab}

\end{document}